\newcommand{\F}{\mathcal{F}}
\newcommand{\Fone}{\mathcal{F}^A}
\newcommand{\Fthree}{\mathcal{F}^{Pol}}
\newcommand{\V}{\mathcal{V}}
\newcommand{\y}{\mathbf{y}}
\newcommand{\z}{\mathbf{z}}
\newcommand{\uu}{\mathbf{u}}
\newcommand{\detm}{\text{det}}
\title{Path-Based Conditions for the Identifiability of Non-additive Nonlinear Networks with Full Measurements}
\author{Renato Vizuete and Julien M. Hendrickx
\thanks{This work was supported by F.R.S.-FNRS via the \emph{KORNET} project, and by the \emph{SIDDARTA} Concerted Research Action (ARC) 
of the Fédération Wallonie-Bruxelles.}
\thanks{R.~Vizuete and J.~M.~Hendrickx are with ICTEAM institute, UCLouvain, B-1348, Louvain-la-Neuve, Belgium. R.~Vizuete is a FNRS Postdoctoral Researcher - CR.
{\tt\small renato.vizueteharo@uclouvain.be},
{\tt\small julien.hendrickx@uclouvain.be}\protect.}
}
\newcommand{\vertiii}[1]{{\left\vert\kern-0.25ex\left\vert\kern-0.25ex\left\vert #1 
    \right\vert\kern-0.25ex\right\vert\kern-0.25ex\right\vert}}
\newtheorem{definition}{Definition}
\newtheorem{assumption}{Assumption}
\newtheorem{theorem}{Theorem}
\newtheorem{proposition}{Proposition}
\newtheorem{lemma}{Lemma}
\newtheorem{remark}{Remark}
\newtheorem{example}{Example}
\newcommand{\abs}[1]{\left|#1\right|}
\newcommand{\norm}[1]{\abs{\abs{#1}}}
\newcommand{\R}{\mathbb R}
\newcommand{\NN}{\mathcal{N}}
\begin{document}

\maketitle
\thispagestyle{empty}

\begin{abstract}
We analyze the identifiability of nonlinear networks with non necessarily additive node dynamics, where the influence of in-neighbors is represented by a multivariate nonlinear function that cannot necessarily be separated into individual edge functions.
We consider the full measurement case (all the nodes are measured) and we introduce the notion of generic identifiability. Based on a generic nonlinear matrix associated with an unfolded digraph constructed from the network, we characterize the space of functions that satisfies the generic property. For directed acyclic graphs (DAGs) composed of analytic functions, we derive a sufficient condition for identifiability based on vertex-disjoint paths from excited nodes to the in-neighbors of each node in the network. Furthermore, for the class of polynomial functions, by using well-known results on algebraic varieties, we prove that the identifiability is impossible if the vertex-disjoint path condition is not satisfied. Finally, we show that this identifiability condition is not necessary for the additive nonlinear model, where the node function can be decomposed into a sum of edge-wise nonlinearities.
\end{abstract}

\begin{IEEEkeywords}
Network analysis and control, system identification.
\end{IEEEkeywords}

\section{Introduction}

Identifiability is a fundamental property in the identification of networked systems \cite{boccaletti2006complex,bullo2022lectures} that allows us to determine which nodes must be excited and measured to identify all the existing dynamics in the system. It is commonly assumed that the network topology is known, which allows obtaining conditions for identifiability based primarily on the structure of the network. When linear dynamics are located in the edges, the identifiability conditions have been fully characterized in the case of full excitation or full measurement \cite{weerts2018identifiability,hendrickx2019identifiability,vanwaarde2020necessary}. 
For more complex interactions \cite{pan2012reconstruction,dorfler2014synchronization,aalto2020gene,bizyaeva2023nonlinear}, most of the dynamics in the edges are nonlinear and the identifiability conditions turn out to be in many cases different from, and in some cases weaker than, those in the linear case due to the lower risk of ambiguities created by linear superposition 
\cite{vizuete2023nonlinear,vizuete2026nonlinear}.

Previous identifiability conditions for nonlinear networks have been derived for additive models. 
In the nonlinear case, a model that is not additive allows us to encompass richer behaviors, as in neural networks \cite{aggarwal2018neural}, opinion dynamics \cite{bizyaeva2023nonlinear}, interconnection of Wiener models \cite{bonassi2024structured}, complex networks \cite{zanudo2017structure}, among others. In this case, the dynamics of a node are represented by a multivariate nonlinear function of past values of the outputs of its in-neighbors, which cannot necessarily be decomposed into a sum of independent edge functions, and the identifiability problem is formulated in node dynamics \cite{vanelli2025local}.

In the full excitation case \cite{vizuete2023nonlinear,vizuete2026nonlinear}, the identifiability conditions are sufficient and necessary for any function in the specific class. However, for some graph topologies, it is possible that some identifiability conditions are valid except for a particular choice of parameters in the dynamics.  
These types of results are known as \emph{generic} \cite{van1991graph}, since they hold except on a zero measure set of parameters such as in the linear case \cite{hendrickx2019identifiability,mapurunga2023identifiability,legat2024identifiability}. However, in the nonlinear case, the functions belong to a space of infinite dimension, and a proper characterization of this notion of genericity is fundamental to determine what a zero measure set encompasses \cite{vanelli2025local}.

A preliminary version of this paper was presented in \cite{vizuete2024partial} where a sufficient condition for the identifiability of DAGs was derived in terms of vertex-disjoint paths when the model is additive nonlinear. Furthermore, it was shown that certain identifiability conditions might not hold for particular choices of functions in the edges, but a formal characterization of this notion of \emph{genericity} was missing. 
In this document, we provide a rigorous definition of genericity based on the nonlinear matrix $J_{H_v(G)}(\uu^v)$ that will play an important role in the proof of Theorem~\ref{thm:sufficiency}.
By contrast to \cite{vizuete2024partial}, in this work, we derive similar identifiability conditions which are sufficient for identifiability for a more general model that is non-additive. In addition, we fully characterize the notion of genericity that was missing in \cite{vizuete2024partial}. Finally, we show that the sufficient condition in \cite{vizuete2024partial} is not necessary for the additive model.

\section{Problem formulation}\label{sec:problem}
\subsection{Model class}

We consider a network characterized by a weakly connected digraph $G=(V,E)$ composed of a set of nodes $V=\{1,\ldots,n\}$ and a set of edges $E\subseteq V\times V$, where a directed edge from $j$ to $i$ is denoted as $(i,j)$. The output of each node $i$ in the network is given by the nonlinear dynamics
\begin{multline}\label{eq:general_nonlinear}
\hspace{-3mm}y_i^k\!=\!u_i^{k-1}+\Phi_i(y_1^{k-0},\ldots,y_1^{k-m_{i,1}},\ldots,y_{M_i}^{k-0},\ldots,y_{M_i}^{k-m_{i,M_i}}), \\ \text{for } y_1,\ldots,y_{M_i}\in\NN_i,  
\end{multline}
where the superscripts of the inputs and outputs denote the values at the specific time instants, $y_i^k$ is the output of the node $i$, $u_i^{k-1}$ is an arbitrary external excitation signal, the delays $m_{i,j}\in \mathbb{Z}^{\ge}$ are nonnegative, $\Phi_i$ is the node function associated with node $i$, $M_i$  is the number of in-neighbors of node $i$, and $\NN_i$ is the set of in-neighbors of node $i$. 

If a node $i$ is not excited, its corresponding excitation signal $u_i$ is set to zero. 
The model  \eqref{eq:general_nonlinear} corresponds to a generalized version of the nonlinear model in \cite{vizuete2023nonlinear,vizuete2024partial}, where the node function $\Phi_i$ is not necessarily additive.

Unlike the additive model in \cite{vizuete2023nonlinear,vizuete2026nonlinear,vizuete2024partial}, we will assume that the topology of the network $G$ defines the potential arguments of the function $\Phi_i$. If $(i,j)\in E$ then $\frac{\partial \Phi_i}{\partial y_j^{k-p}}\not\equiv 0$ for at least some $0\le p\le m_{i,j}$ (i.e., actually depends on nontrivial outputs of the in-neighbors). We do not consider multiple edges between two nodes, since they would be indistinguishable and hence unidentifiable.

\begin{assumption}\label{ass:full_excitation}
    The graph $G$ associated with the network is known, where the presence of an edge $(i,j)$ implies $\frac{\partial \Phi_i}{\partial y_j^{k-p}}\not\equiv 0$ for at least some $0\le p\le m_{i,j}$.
\end{assumption}

Assumption~\ref{ass:full_excitation} implies that we know if a node function $\Phi_i$ is influenced by the output of a specific node $y_j$.

\begin{figure}[!t]
    \centering
    \begin{tikzpicture}
    [fill fraction/.style n args={2}{path picture={
            \fill[#1] (path picture bounding box.south west) rectangle
            ($(path picture bounding box.north west)!#2!(path picture bounding box.north east)$);}},
roundnodes/.style={circle, draw=black!60, fill=black!25, very thick, minimum size=1mm},roundnode/.style={circle, draw=white!60, fill=white!5, very thick, minimum size=1mm},roundnodes4/.style={circle, draw=black!60, fill=black!25, fill fraction={white!30}{0.5}, very thick, minimum size=1mm}
]

\small

\node[roundnodes4](node1){1};
\node[roundnodes4](node2)[above=of node1,yshift=0.0cm,xshift=2.6cm]{2};
\node[roundnodes](node3)[right=of node1,yshift=0mm,xshift=1cm]{3};
\node[roundnodes](node4)[below=of node1,yshift=-0.0cm,xshift=2.6cm]{4};
\node[roundnodes](node5)[right=of node3,yshift=0mm,xshift=1cm]{5};

\node(phi5)[above=of node5,yshift=-1.05cm,xshift=0.95cm]{\footnotesize{$y_5\!=\!\Phi_5(y_2,y_3,y_4)$} };
\node(phi2)[above=of node2,yshift=-1.4cm,xshift=1.6cm]{\footnotesize{$y_2\!=\!\Phi_2(y_1,y_3)\!+\!u_2$} };
\node(phi4)[above=of node4,yshift=-1.6cm,xshift=1.3cm]{\footnotesize{$y_4\!=\!\Phi_4(y_1,y_3)$} };
\node(phi3)[above=of node3,yshift=-1.1cm,xshift=0.8cm]{\footnotesize{$y_3\!=\!\Phi_3(y_1)$} };
\node(phi1)[above=of node1,yshift=-1.0cm,xshift=0.0cm]{\footnotesize{$y_1\!=\!u_1$} };

\node[roundnode](u1)[above=of node1,yshift=-12mm,xshift=-12mm]{$u_1$};
\node[roundnode](u2)[above=of node2,yshift=-12mm,xshift=-12mm]{$u_2$};

\draw[-{Classical TikZ Rightarrow[length=1.5mm]}] (node1) to node [right,swap,yshift=2.5mm] {} (node2);
\draw[-{Classical TikZ Rightarrow[length=1.5mm]}] (node1) to node [right,swap,yshift=2.5mm] {} (node4);
\draw[-{Classical TikZ Rightarrow[length=1.5mm]}] (node1) to node [right,swap,yshift=2.5mm] {} (node3);
\draw[-{Classical TikZ Rightarrow[length=1.5mm]}] (node3) to node [right,swap,yshift=2.5mm] {} (node4);
\draw[-{Classical TikZ Rightarrow[length=1.5mm]}] (node2) to node [right,swap,yshift=2.5mm] {} (node5);
\draw[-{Classical TikZ Rightarrow[length=1.5mm]}] (node3) to node [right,swap,yshift=2.5mm] {} (node5);
\draw[-{Classical TikZ Rightarrow[length=1.5mm]}] (node4) to node [right,swap,yshift=2.5mm] {} (node5);
\draw[-{Classical TikZ Rightarrow[length=1.5mm]}] (node3) to node [right,swap,yshift=2.5mm] {} (node2);

\draw[gray,dashed,-{Classical TikZ Rightarrow[length=1.5mm]}] (u1) -- (node1);
\draw[gray,dashed,-{Classical TikZ Rightarrow[length=1.5mm]}] (u2) -- (node2);

\end{tikzpicture}

\normalsize

    \caption{Model of a network considered for the identification where all the nodes are measured (gray) and some nodes can be excited  (white and gray). The dynamics of each node is determined by a nonlinear function $\Phi_i$ of the outputs of the in-neighbors.}

    \label{fig:model}
\end{figure}

In this work, we restrict our attention to networks that do not contain any cycle (i.e., directed acyclic graphs), so that the function $F_i$ associated with the measurement of a node $i$ is of the form 
\begin{multline}\label{eq:function_Fi}
  \!\!\!\!\!  y_i^k=u_i^{k-1}+F_i(u_1^{k-1},\ldots,u_1^{k-M_1},\ldots,u_{n_i}^{k-1},\ldots,u_{n_i}^{k-M_{n_i}}),\\
    1,\dots,n_i\in \mathcal{N}^{e\to i},
\end{multline}
where $\mathcal{N}^{e\to i}$ is the set of excited nodes with a path to the \linebreak node $i$. The function $F_i$ determines the output of the node $i$ through the dynamics \eqref{eq:general_nonlinear} and the excitation signals of the nodes in $\mathcal{N}^{e\to i}$, and only depends on a finite number of inputs (i.e., $M_1,\ldots,M_{n_i}$ are finite) due to the finite delays $m_{i,j}$ and the absence of cycles. The model \eqref{eq:function_Fi}  is of the type Nonlinear Finite Impulse Response (NFIR) where the output is a nonlinear function only of the excitation signals (i.e., inputs) \cite{ramirez2021nonlinear,pillonetto2025deep}.
For the identifiability analysis, we consider the notion of identifiability in system identification \cite{ljung1999system}, where the objective is to determine if there exists a unique set of local dynamics $\Phi_i$ that leads to a global behavior given by the functions $F_i$. Hence, we make the following assumption for the derivation of the identifiability conditions.

\begin{assumption}\label{ass:function_Fi}
    If the node $i$ is measured, the function $F_i$ is known.
\end{assumption}

\begin{remark}[Functions $\Phi_i$ and $F_i$]
Although $\Phi_i$ and $F_i$ are functions associated with a node $i$, they are not the same since $\Phi_i$ determines the dynamics of $i$ while $F_i$ is the information obtained through the measurement of $i$. Notice that $\Phi_i$ is a function of the outputs of the in-neighbors and  is independent of the set of excited nodes $\NN^e$, while $F_i$ is a function of the excitation signals and depends on $\NN^e$.
\end{remark}

In this work, we will analyze the full measurement case where the set of measured nodes $\NN^m$ is given by $\NN^m=V$. This setting corresponds to networks where we have access to all the nodes, whose outputs can generally be measured using an appropriate sensing device. Our objective is to determine which nodes need to be excited to identify all the node functions $\Phi_i$. In this way, our aim is to determine the possibility of identification, and not to derive identification algorithms or study identification methods. This is different from the identifiability problem in structural equation models where the setting is probabilistic and the objective is the identifiability of probability distributions \cite{jordan2004graphical}. Fig.~\ref{fig:model} presents the framework considered in this work where all the nodes are measured, some nodes are excited and the node functions $\Phi_i$ depend on the outputs of the in-neighbors.

\subsection{Generic identifiability}\label{subsec:generic_ident}

We define the relationships between the measurements of the nodes and the functions $\Phi_i$.

\begin{definition}[Set of measured functions]\label{def:set_measured_functions}
    Given a set of excited nodes $\NN^e$, the totally ordered set of measured functions $(F(\mathcal{N}^e),\leq)$ associated with $\mathcal{N}^e$ is given by:
    $$
    F(\mathcal{N}^e):=\{F_i\;|\;i\in V\},
    $$
    with $F_i\leq F_j$ if $i\leq j$.
\end{definition}

For instance, let us consider again the DAG in Fig.~\ref{fig:model} with outputs of the nodes 2 and 3 of the form $y_2^k=\Phi_2(y_1^{k-1},y_3^{k-1})+u_2^{k-1}$ and $y_3^k=\Phi_3(y_1^{k-1})$ respectively. Since the nodes 1 and 2 are excited, the output of the node 2 can be expressed as $y_2^k=u_2^{k-1}+\Phi_2(u_1^{k-2},\Phi_3(u_1^{k-3}))$, where according to \eqref{eq:function_Fi}, the function $F_2$ is given by $F_2=\Phi_2(u_1^{k-2},\Phi_3(u_1^{k-3}))$. The set of measured functions is given by $F(\NN^e)=\{F_1,F_2,F_3,F_4,F_5 \}$ where each $F_i$ is a function of the excited nodes 1 and 2 similar to $F_2$.

Since the potential functions considered for the analysis of identifiability must in general satisfy some constraints 
depending on the type of network or application, we restrict the problem to certain classes of functions $\F_i$: a node function $\Phi_i$ belongs to $\F_i$ and the identifiability is considered only among the functions belonging to $\F_i$. 

We say that a digraph $G$ and a totally ordered set of node functions $(\{ \Phi\},\leq)=\{\Phi_i\in \F_i\;|\;i\in V\}$ with $\Phi_i\leq \Phi_j$ if $i\leq j$ generate $F(\mathcal{N}^e)$ if the functions $F_i \in F(\mathcal{N}^e)$ are recursively constructed from the node dynamics $\Phi_i$ via the dynamical equation \eqref{eq:general_nonlinear}. Since the ordering of the functions only depends on the labeling of the nodes, in the rest of the paper, we will refer to $F(\mathcal{N}^e)$ and  $\{ \Phi\}$ only as a set of measured functions and a set of node functions respectively.

While in the full excitation case \cite{vizuete2023nonlinear,vizuete2026nonlinear}, the set of measured functions depends on the set of measured nodes, in the full measurement case, the set of measured functions encompasses all the functions $F_i$, and its dependence is expressed with respect to the set of excited nodes.

Unlike \cite{vizuete2023nonlinear,vizuete2026nonlinear} where the notion of \emph{global identifiability}\footnote{Global identifiability refers to all functions as in \cite{vanwaarde2020necessary} and is different from the notion of global identifiability used to distinguish it from local identifiability as in \cite{legat2024identifiability}.} (i.e., for all functions) was used to derive the identifiability conditions, in this work, we will introduce the notion of \emph{generic identifiability} (i.e., for almost all functions) with respect to a vector $P$.

\begin{definition}[Generic Identifiability]\label{def:math_identifiability}
    Given a set of collection of parameterized node functions $\{ \Phi(P) \}=\{\Phi_i(P)\in \F_i\;|\;i\in V\}$ where each $P$ generates $F_P(\NN^e)$. A family of node functions $\Phi_i(P)$ is generically identifiable in the class $\F_i$ if $F_P(\NN^e)=F_{\tilde P} (\NN^e)$, implies that $\Phi_i(P)=\Phi_i(\tilde P)$ for any $P$ except possibly on a zero measure set. A network $G$ is generically identifiable in the class $\F=\F_1\times\F_2\times\cdots\times \F_n$ if $F_P(\NN^e)=F_{\tilde P} (\NN^e)$, implies that $\{\Phi(P) \}=\{ \Phi(\tilde P) \}$ for any $P$ except possibly on a zero measure set.
\end{definition}
The parametrization of the node functions  $\Phi_i(P)$ and the zero measure set of functions associated with the definition of generic identifiability will be clarified in Section~\ref{sec:generic}. Definition~\ref{def:math_identifiability} implies that a network $G$ is generically identifiable in the class $\F$ if all the node functions $\Phi_i$ are generically identifiable in the classes $\F_i$. Notice that Definition~\ref{def:math_identifiability} also covers the trivial case with no excitation signals (i.e., $\NN^e=\emptyset$). However, in this case all the functions $F_i$ would be identically zero and all the node functions $\Phi_i$ would be unidentifiable. Since each parameter $\tilde P$ denotes a specific set of node functions, in the rest of the work, we will use the notation $\tilde \Phi_i$, $\tilde F(\NN^e)$ and $\{ \tilde \Phi \}$ for $\Phi_i(\tilde P)$, $F_{\tilde P}(\NN^e)$ and $\{ \Phi(\tilde P) \}$ respectively.

In this work, we will consider analytic entire functions (i.e., for all $\hat x\in\R$, their Taylor series around $\hat x$ is convergent in a neighborhood of $\hat x$), whose properties will allow us to introduce finite-dimensional parametrization in the notion of generic identifiability.

\begin{definition}[Class of functions $\Fone_i$]\label{def:class_Ftwo}
Let $\Fone_i$ be the class of  functions $\Phi:\R^{m_i}\to\R$ where each function $\Phi$ is analytic.
\end{definition}

For each node function $\Phi_i$, the class of analytic functions will be determined by the number of arguments (i.e., delays) of $\Phi_i$ in \eqref{eq:general_nonlinear}. For instance, a node function $\Phi_1(y_2^{k-1},y_2^{k-2})$ belongs to the class of analytic functions $\Phi:\R^2\to\R$, while another node function $\Phi_2(y_3^{k-1},y_3^{k-2},y_3^{k-3})$ belongs to the class of analytic functions $\Phi:\R^3\to\R$. 
Therefore, in the rest of this work we will assume that $\Phi_i\in\Fone_i$ for all $i \in V$, and all the possible alternative functions $\tilde \Phi_i$ that could generate $F(\NN^e)$ must also belong to $\F^A_i$ (e.g., discontinuous functions are not considered even if they also generate $F(\NN^e)$).
For convenience, we will separate the univariate part from the rest of the function, so that a node function $\Phi_i$ can be rewritten as
\begin{multline}\label{eq:decomposition_Fi}
    \Phi_i=\sum_{j=1}^{M_i}\sum_{\ell_j=0}^{m_{i,j}} f_{i,j}^{\{\ell_j\}}(y_j^{k-\ell_j})+\\
    \!g_i(y_1^{k-0},\ldots,y_1^{k-m_{i,1}},\ldots,y_{M_i}^{k-0},\ldots,y_{M_i}^{k-m_{i,M_i}}),
\end{multline}
where the function $g_i$ encompasses all the terms corresponding to the product of two or more variables $y_j^{k-\ell}$, and a possible constant term, but not the terms depending only on one variable $y_j^{k-\ell}$. 
Hence, the identifiability of the functions $g_i$ and $f_{i,j}^{\{\ell\}}$ for $j\in\NN_i$ guarantees the identifiability of $\Phi_i$. By contrast, the additive model is of the form \cite{vizuete2023nonlinear,vizuete2026nonlinear}
\begin{equation}\label{eq:additive_model}
    y_i^k=u_i^{k-1}+\sum_{j\in \mathcal{N}_i}f_{i,j}(y_j^{k-0},\ldots,y_j^{k-{m_{i,j}}}),   
\end{equation}
\hspace{-3mm} where $f_{i,j}$ is a nonlinear function associated with the edge $(i,j)$, and the objective of the identifiability in this additive model is to determine conditions to identify the functions $f_{i,j}$ according to \cite[Definition 2]{vizuete2026nonlinear}. Moreover, 
by considering that each function $f_{i,j}$ is also additively separable in \eqref{eq:additive_model}, we obtain the \emph{fully additive} model \cite{vizuete2026nonlinear}: 
\begin{equation}\label{eq:fully_additive_model}
    y_i^k=u_i^{k-1}+\sum_{j=1}^{M_i}\sum_{\ell_j=0}^{m_{i,j}} f_{i,j}^{\{\ell_j\}}(y_j^{k-\ell_j}).   
\end{equation}

If for some functions $\Phi_i$ we have $g_i=0$ for all $i\in V$, then the network is fully additive. Also, the network becomes fully additive if the additional constraint $g_i=0$ for all $i\in V$ is imposed.
Unlike the additive model \eqref{eq:additive_model}, a static component does not affect the identifiability since we are interested on the identifiability of the node function $\Phi_i$ and not of the particular decomposition into $f_{i,j}$ and $g_i$, such that a static component can be included on any of the functions $f_{i,j}$ or $g_i$ without changing the function $\Phi_i$.  
In our setting, based on the particular decomposition \eqref{eq:decomposition_Fi}, it is assumed that the constant term is included in the function $g_i$.

\section{Generic nonlinear matrix for $g_i=0$}\label{sec:generic}

The notion of generic identifiability, according to which a network might not be identifiable only for a few particular cases of functions, can be motivated by \cite[Example~1]{vizuete2024partial}. In this section, we will use the decomposition \eqref{eq:decomposition_Fi} and consider the simpler problem where all the functions $g_i=0$ (i.e., the fully additive model). This simplification will be used as a mathematical tool to analyze the more general case of non-additive nonlinear networks.

We will formalize this notion of generic identifiability with respect to the functions $f_{i,j}^{\{\ell\}}$. First, we define the \emph{$K$-analytic parametrization consistent with a given digraph $G$ and delays} in the following way. Given a $K\in\mathbb{Z}^+$, for every function $f_{i,j}^{\{\ell\}}$ associated with an edge $(i,j)\in E$, set variables $\beta_{i,j,\ell}^{(p)}\in\R$, with $p\ge K+1$, and parametrize $f_{i,j}^{\{\ell\}}(y_j^{k-\ell})$ by
\begin{equation}\label{eq:parameterization}
f_{i,j}^{\{\ell\}}(y_j^{k-\ell})=\sum_{s=1}^K \alpha_{i,j,\ell}^{(s)}(y_j^{k-\ell})^{s}+\sum_{s=K+1}^\infty \beta_{i,j,\ell}^{(s)}(y_j^{k-\ell})^s,    
\end{equation}
for real parameters $\alpha_{i,j,\ell}^{(q)}$, with $1\le q\le K$, where we assume that the series $\sum_{s=K+1}^\infty \beta_{i,j,\ell}^{(s)}(y_j^{k-\ell})^{s}$ is convergent. For pairs $(i,j)$ not connected by an edge, let $f_{i,j}^{\{\ell\}}\equiv0$. We collect all parameters $\alpha_{i,j,\ell}^{(q)}$ in a vector $P$, and introduce the notion of a $K$-generic property related to the digraph $G$, which is related to the notion of genericity used in \cite{vanelli2025local}.
\begin{definition}[$K$-generic property]\label{def:K_generic_prop}
    We say that a property $K$-\emph{generically} holds for a given digraph $G$ if, for the $K$-analytic parametrization  consistent with the digraph $G$ and delays, the property holds for all parameters $\alpha_{i,j,\ell}^{(s)}$ except possibly those lying on a zero measure set, and for every $\beta_{i,j,\ell}^{(s)}$.
\end{definition}
Notice that in Definition~\ref{def:K_generic_prop}, the variables $\beta_{i,j,\ell}^{(s)}$ are not involved in the notion of a zero measure set, since the property depends on a finite-order truncation of the edge functions considering only the coefficients $\alpha_{i,j,\ell}^{(s)}$. Then, based on the notion a $K$-generic property, we define a generic property. 

\begin{definition}[Generic property]\label{def:generic_prop}
    We say that a property \emph{generically} holds for a given digraph $G$ if there exists a $K\in\mathbb{Z}^+$, such that the property is $K$-generic for any configuration of delays.
\end{definition}

Since in \eqref{eq:decomposition_Fi}, each function $f_{i,j}$ can be decomposed in the sum of functions of the form $f_{i,j}^{\{\ell\}}$, we can associate an edge to each function $f_{i,j}^{\{\ell\}}$ with an appropriate copy of the nodes $i$ and $j$. For this reason, we will use the \emph{unfolded digraph} at a node $v$ and time instant $k$, denoted by $H_v^k(G)$, which is constructed according to Algorithm~1 in \cite{vizuete2026nonlinear} by considering the initial time instant of the unfolded digraph as $t=0$. Since we are working with DAGs, the time instant $k$ can be considered sufficiently large and its dependence can be removed from the notation, so that we will only use $H_v(G)$. The unfolded digraph $H_v(G)=(V_H,E_H)$ is generated by creating copies of nodes at several time instants, so that the subscript in $i_m$ denotes the time instant of creation of the copy of the node $i\in V$, where $v_0$ is used to denote the origin of $H_v(G)$ that corresponds to the only sink. Each node has its own excitation signal and two nodes $j_t$ and $i_m$ are connected by a nonlinear function without delays $\pi_{i_m,j_t}(y_j^k)=f_{i,j}^{\{t-m\}}(y_j^k)$ if the node $j_t$ at time $t$ influences directly the node $i_m$ at time $m$. Unlike the full excitation case, if a node $i$ is not excited, we just consider a zero value for the excitation signals of all the copies $i_m$. For instance, let us consider a path graph with 3 nodes where the node 1 is excited and the nodes 2 and 3 are measured, and node functions of the form $\Phi_3(y_2^{k-1},y_2^{k-2})$ and $\Phi_2(y_1^{k-1},y_1^{k-2})$. Fig.~\ref{fig:unfolded_digraph} presents the unfolded digraph $H_3(G)$ associated with the path graph where the excitation signals of the copies of the nodes 2 and 3 are set to zero since the nodes are not excited.

\begin{figure}[!ht]
    \centering
    \begin{tikzpicture}
    [
roundnodes/.style={circle, draw=black!60, fill=black!5, very thick, minimum size=1mm},roundnode/.style={circle, draw=white!60, fill=white!5, very thick, minimum size=1mm},roundnodes2/.style={circle, draw=black!60, fill=black!25, very thick, minimum size=1mm},roundnodes3/.style={circle, draw=black!60, fill=white!30, very thick, minimum size=1mm}
]

\small

\node[roundnodes3](node4){$1_2$}(0,0.3) node[above] {$u_1^{k-2}$};
\node[roundnodes3](node5)[below=of node4,yshift=-0cm,xshift=0cm]{$1_3$}(0,-1.42) node[above] {$u_1^{k-3}$};
\node[roundnodes3](node6)[below=of node5,yshift=-0cm,xshift=0cm]{$1_4$}(0,-3.15) node[above] {$u_1^{k-4}$};
\node[roundnodes2](node8)[right=of node4,yshift=-8mm,xshift=2.25cm]{$2_1$}(4,-0.5) node[above] {$\cancelto{0}{u_2^{k-1}}$};
\node[roundnodes2](node9)[below=of node8,yshift=-1mm,xshift=0cm]{$2_2$}(4,-2.32) node[above] {$\cancelto{0}{u_2^{k-2}}$};
\node[roundnodes2](node11)[right=of node9,yshift=10mm,xshift=2.25cm]{$3_0$}(7.9,-1.32) node[above] {$\cancelto{0}{u_3^{k}}$};

\small

\draw[-{Classical TikZ Rightarrow[length=1.5mm]}] (node4) to node [above,swap,yshift=0mm,xshift=4mm] {$\pi_{2_1,1_2}\!=\! f_{2,1}^{\{1\}}$} (node8);
\draw[-{Classical TikZ Rightarrow[length=1.5mm]}] (node5) to node [above,swap,yshift=0mm,xshift=-4mm] {$\pi_{2_1,1_3}\!=\!f_{2,1}^{\{2\}}$} (node8);
\draw[-{Classical TikZ Rightarrow[length=1.5mm]}] (node5) to node [above,swap,yshift=0mm,xshift=4mm] {$\pi_{2_2,1_3}\!=\!f_{2,1}^{\{1\}}$} (node9);
\draw[-{Classical TikZ Rightarrow[length=1.5mm]}] (node6) to node [above,swap,yshift=0mm,xshift=-4mm] {$\pi_{2_2,1_4}\!=\!f_{2,1}^{\{2\}}$} (node9);
\draw[-{Classical TikZ Rightarrow[length=1.5mm]}] (node8) to node [above,swap,yshift=1mm,xshift=2mm] {$\pi_{3_0,2_1}\!=\!f_{3,2}^{\{1\}}$} (node11);
\draw[-{Classical TikZ Rightarrow[length=1.5mm]}] (node9) to node [below,swap,yshift=-0.5mm,xshift=2mm] {$\pi_{3_0,2_2}\!=\!f_{3,2}^{\{2\}}$} (node11);

\normalsize

\end{tikzpicture}
    \caption{Unfolded digraph $H_3(G)$ of a path graph $G$ with 3 nodes where the node 1 is excited and the nodes 2 and 3 are measured. The node functions are of the form $\Phi_3(y_2^{k-1},y_2^{k-2})$ and $\Phi_2(y_1^{k-1},y_1^{k-2})$. Several edge functions $\pi_{i_m,j_t}$ in $H_3(G)$ are given by the same edge functions $f_{i,j}^{\{\ell\}}$ of $G$.}
    \label{fig:unfolded_digraph}
\end{figure}

Now, we introduce some matrices that will be used in the proof of Theorem~\ref{thm:sufficiency}. We begin by defining a network matrix $J_\pi^0(\y^v)$ associated with the unfolded digraph $H_v(G)$ composed of the derivatives of the functions $\pi_{i_m,j_t}$:
\begin{equation}\label{eq:matrix_J_0}
J_\pi^0(\y^v)=
[\pi'_{i_m,j_t}(\y^v)],
\end{equation}
where $\y^v\in\R^{N_y}$ is a vector encompassing all the outputs of the nodes of $H_v(G)$ and can be considered as a free variable. This matrix can be interpreted as a nonlinear analogue of a weighted adjacency matrix where the entries are given by derivatives of the nonlinear functions $\pi'_{i_m,j_t}$ as a Jacobian type matrix.
Since the graph is acyclic, by applying \eqref{eq:fully_additive_model} to $J_\pi^0(\y^v)$, each entry of $J_\pi^0(\y^v)$ can be expressed as a function of $\uu^v\in \R^{N_u}$, which is a vector encompassing all the excitation signals in $H_v(G)$. Then, we define the \emph{nonlinear network matrix} $J_{H_v(G)}(\uu^v)$ as the matrix $J_\pi^0(\y^v)$ evaluated in $\y^v$ according to \eqref{eq:fully_additive_model}:
\begin{equation}\label{eq:matrix_J_G}
J_{H_v(G)}(\uu^v)=[\Pi_{i_m,j_t}(\uu^v)],    
\end{equation}
where each $\Pi_{i_m,j_t}$ is an analytic function since it is obtained by sums and compositions of the functions $f_{i,j}^{\{\ell\}}$. While $J_\pi^0(\y^v)$ describes a behavior of the network with respect to the outputs of nodes, the matrix $J_{H_v(G)}(\uu^v)$ describes the behavior of the network with respect to the excitation signals, which are essential in the identifiability in the full measurement case.
Notice that the coefficients of the Taylor series of each $\Pi_{i_m,j_t}$ is a function of the parameters  $\alpha_{i,j,\ell}^{(s)}$  and $\beta_{i,j,\ell}^{(s)}$ in \eqref{eq:parameterization}.

In our work, we are particularly interested in the genericity of the rank of the square submatrices of $J_{H_v(G)}(\uu^v)$, which will be used in the proof of Theorem~\ref{thm:sufficiency}. The maximal rank of a square submatrix $J_{H_v(G)}^{A,B}(\uu^v)$ is the highest possible rank. Given a square submatrix $J_{H_v(G)}^{A,B}(\uu^v)$, we say that its rank is \emph{generic} if having the maximal rank for almost all $\uu^v$ holds generically over the $K$-parametrizations. Notice that the genericity of the rank is considered only with respect to the parameters $\alpha_{i,j,\ell}^{(s)}$, and excludes the excitation signals $\uu^v$.

\begin{proposition}\label{prop:generic_rank_submatrix}
    The rank of any square submatrix $J_{H_v(G)}^{A,B}(\uu^v)$ is generic for almost all $\uu^v$.
\end{proposition}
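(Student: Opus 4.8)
The plan is to reduce the statement to a single scalar analytic function and then invoke twice the fact that a real-analytic function which is not identically zero vanishes only on a set of Lebesgue measure zero. Fix the square submatrix $J_G^{A,B}(\uu_G^k)$ and let $r^*$ denote its maximal rank, i.e. the largest $r$ for which some $r\times r$ minor is not identically zero as a function of $(\uu_G^k,P,\beta)$, where $P$ collects the $\alpha$-parameters and $\beta$ the higher-order coefficients. By definition of $r^*$, every $(r^*+1)$-minor vanishes identically, so $\rank J_G^{A,B}\le r^*$ everywhere; it therefore suffices to exhibit, for a suitable $K$, a full-measure set of $\alpha$-parameters (and every $\beta$) on which some fixed $r^*\times r^*$ minor $M^*$ is not identically zero in $\uu_G^k$. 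Indeed, each minor is a finite sum of products of the analytic entries $F_{i,j}(\uu_G^k)$, hence analytic in $\uu_G^k$; so once $M^*(\cdot\,;P,\beta)\not\equiv 0$, the analytic dichotomy yields $M^*\neq 0$, and thus $\rank J_G^{A,B}=r^*$, for almost all $\uu_G^k$.

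The next step is to isolate a finite-degree witness. Since $r^*$ is maximal, there is an $r^*\times r^*$ minor $M^*$ and a point $(\uu_0,P_0,\beta_0)$ with $M^*(\uu_0;P_0,\beta_0)\neq 0$. Viewing $M^*(\cdot\,;P_0,\beta_0)$ as an analytic function of $\uu_G^k$, some Taylor coefficient of it in $\uu_G^k$ is nonzero; let $d^*$ be the degree of such a coefficient $c$. The key structural remark is that, because the parametrization \eqref{eq:parameterization} has no constant term, every entry $F_{i,j}$ is a sum of compositions of functions $f_{a,b}$ that vanish at the origin, so the solution of the acyclic system at $\uu_G^k=0$ is $\y^k=0$, and the degree-$d$ Taylor coefficient (in $\uu_G^k$) of any $F_{i,j}$, and hence of $M^*$, is a polynomial in the coefficients of the $f_{a,b}$ of order at most $d$ only. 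This is the usual fact that the degree-$d$ part of a composition of series vanishing at the origin depends only on the degree-$\le d$ terms of its constituents.

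Finally I would fix the value of $K$. Choosing $K\ge d^*$, the parametrization assigns the coefficients of each $f_{i,j}$ up to order $d^*$ to the $\alpha$-parameters $P$; consequently the coefficient $c$ depends only on $P$ and is \emph{independent of} $\beta$. Thus $c=c(P)$ is a polynomial in the finitely many entries of $P$ with $c(P_0)\neq 0$, so $c$ is a nonzero polynomial and its zero set $\{P:c(P)=0\}$ has Lebesgue measure zero. For every $\beta$ and every $P$ outside this set, the minor $M^*(\cdot\,;P,\beta)$ has the nonzero Taylor coefficient $c(P)$, hence is not identically zero in $\uu_G^k$; by the first paragraph it then attains the maximal rank $r^*$ for almost all $\uu_G^k$. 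This is precisely the statement that the property ``$J_G^{A,B}$ has maximal rank for almost all $\uu_G^k$'' holds $K$-generically, and therefore generically in the sense of Definition~\ref{def:generic_prop}.

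I expect the main obstacle to be the decoupling from $\beta$: one must verify carefully that the degree-$d^*$ Taylor coefficient of the chosen minor involves only the finitely many low-order coefficients of the edge functions, so that enlarging $K$ sweeps all of them into the $\alpha$-parameters governed by the zero-measure exception while the arbitrary $\beta$-part plays no role. Everything else is a routine application of the analytic zero-set dichotomy.
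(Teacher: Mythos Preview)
Your argument is correct and follows essentially the same route as the paper's proof: pick a minor that is not identically zero, single out one nonzero Taylor coefficient in $\uu_G^k$, observe that this coefficient is a polynomial in finitely many low-order parameters of the edge functions, choose $K$ large enough so all of them are $\alpha$-parameters, and conclude by the analytic zero-set dichotomy. Your write-up is in fact a little more careful than the paper's on two points: you treat the case where the maximal rank $r^*$ is strictly less than the size of $J_G^{A,B}$ (the paper only argues with the full determinant), and you make the decoupling from $\beta$ explicit; the only minor slip is that the degree-$d^*$ coefficient of the minor may involve coefficients of the $f_{a,b}$ up to order roughly $d^*+1$ (via $f'_{i,j}$), so one should take $K\ge d^*+1$ rather than $K\ge d^*$, which does not affect the argument.
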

\begin{proof}
    The determinant of any square submatrix $J_{H_v(G)}^{A,B}(\uu^v)$, denoted by $\detm(J_{H_v(G)}^{A,B}(\uu^v))$ is given by sums and products of analytic functions of the form $\pi'_{i_m,j_t}$, so that $\detm(J_{H_v(G)}^{A,B}(\uu^v))$ is also analytic and can be expressed as
\begin{multline}\label{eq:determinant_terms}
\detm(J_{H_v(G)}^{A,B}(\uu^v))=\\
\sum_{p_1,\ldots,p_{N_u}=0} a_{p_1,\ldots,p_{N_u}}(\uu^v_1)^{p_1}\cdots (\uu^v_{N_u})^{p_{N_u}},
\end{multline}
If the determinant is not identically zero, there must exist at least one coefficient $a_{\hat p_1,\ldots,\hat p_{N_u}}\neq 0$. Let us denote by $a_{\min}$ one of the nonzero coefficients corresponding to the minimum degree $D_{\min}$ of the terms in \eqref{eq:determinant_terms}. This coefficient is given  by a polynomial $p(P_{\min})$, where $P_{\min}\in\R^K$ is a vector that encompasses the coefficients $\alpha_{i,j,\ell}^{(s)}$ and variables $\beta_{i,j,\ell}^{(s)}$ of the functions $f_{i,j}^{\{\ell\}}$ in \eqref{eq:parameterization} associated with terms whose degree is at most $D_{\min}$, and $K$ is finite and nonzero. 
Notice that this polynomial   $p(P_{\min})$ can be zero only on a subspace of dimension at most $K-1$. Then, the rank of the submatrix $J_{H_v(G)}^{A,B}$ is maximal for all $P_{\min}\in\R^{K}$, except possibly on a subspace of dimension at most $K-1$, which has measure zero. Since one of the coefficients in \eqref{eq:determinant_terms} is nonzero, the determinant is nonzero for almost all $\uu^v$, given that the determinant is an analytic function of $\uu^v$ and can only be zero in a zero measure set in the space of excitation signals. Therefore, the rank of any square submatrix $J_{H_v(G)}^{A,B}$ is $K$-generic and since it holds for this particular value of $K$, it is also generic. 
\end{proof}
The coefficients in \eqref{eq:determinant_terms} can be considered as elements of the space of infinite sequences with the infinite norm $(\ell^\infty,\norm{\cdot}_\infty)$. In this case, the set of coefficients that satisfies the generic property is open and dense in $(\ell^\infty,\norm{\cdot}_\infty)$, which implies that it is also Baire-generic \cite{oxtoby1980measure}.

\section{Directed Acyclic Graphs}\label{sec:DAGs}

\subsection{Sufficient condition}

First, we will analyze the role of the sources and sinks in the identifiability of DAGs in the  model \eqref{eq:general_nonlinear}.

\begin{lemma}\label{lemma:sinks_sources}
    The outgoing edges of sources are not identifiable if the sources are not excited. The excitation of sinks is never necessary for the identifiability of the network. The measurement of sources is never necessary.
\end{lemma}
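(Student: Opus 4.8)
The plan is to establish the three assertions separately, each resting on a structural feature of the acyclic model \eqref{eq:general_nonlinear} together with the static-component observation made around Fig.~\ref{fig:decomposition}. Throughout I use that a source $i$ has $\mathcal{N}_i=\emptyset$, so $\Phi_i$ has no arguments and reduces to a constant $c_i$, while a sink $s$ has no out-neighbours and hence reaches no other node.

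For the first claim, I would note that an unexcited source $i$ has $u_i\equiv 0$, which together with $\Phi_i\equiv c_i$ forces $y_i^k\equiv c_i$ for all $k$. Consequently, for any node $j$ with $i\in\mathcal{N}_j$, the analytic function $\Phi_j$ is only ever evaluated with its $y_i$-slot frozen at $c_i$. I would then exhibit a genuinely different competitor in the same class: keep every other node function fixed and set $\tilde\Phi_j=\Phi_j+\bigl(y_i^{k-m_{j,i}}-c_i\bigr)\rho$ for a suitable analytic $\rho\not\equiv 0$. Since $\tilde\Phi_j$ coincides with $\Phi_j$ on the realised subspace $y_i=c_i$, both sets of node functions generate the same $F(\mathcal{N}^e)$; yet $\tilde\Phi_j\neq\Phi_j$ and (generically in $\rho$) $\partial\tilde\Phi_j/\partial y_i\not\equiv 0$, so the edge $(j,i)\in E$ is still present but cannot be recovered. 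Hence the outgoing edges of the unexcited source are not identifiable.

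For the second claim I would argue that toggling the excitation of a sink $s$ leaves the whole set of measured functions unchanged, so it can never help. Because $s$ reaches no node, $s\notin\mathcal{N}^{e\to j}$ for every $j\neq s$, and therefore none of the functions $F_j$ with $j\neq s$ depends on $u_s$. For $s$ itself, \eqref{eq:function_Fi_2} shows that $u_s$ enters only through the explicit additive term $u_s^{k-1}$, while $F_s$ collects the contributions routed through the in-neighbours of $s$, which are determined by upstream excitations and are independent of $u_s$ since the graph is acyclic. Thus $F(\mathcal{N}^e)$ is identical with and without $u_s$, and by Definition~\ref{def:math_identifiability} identifiability is unaffected: exciting a sink is never necessary. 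The third claim follows the same line: for a source $i$ we have $y_i^k=u_i^{k-1}+c_i$, so measuring $i$ conveys nothing beyond the value of the constant $c_i$. Invoking the static-component discussion of Section~\ref{sec:problem}, a constant can be absorbed without altering any node function and so does not affect identifiability; when $i$ is excited the downstream dependence on $y_i$ is probed through the known signal $u_i$, and when $i$ is not excited that dependence is unidentifiable anyway by the first claim. In either case dropping $F_i$ removes only the redundant value $c_i$, so measuring a source is never necessary.

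I expect the \emph{main obstacle} to be the explicit construction in the first claim: producing a competitor $\tilde\Phi_j$ that stays in the analytic class $\Fone$, respects Assumption~\ref{ass:full_excitation} by keeping $\partial\tilde\Phi_j/\partial y_i\not\equiv 0$, and is nonetheless indistinguishable on the realised trajectory. A secondary delicate point is ensuring, in the third claim, that the source's constant is genuinely a harmless static component rather than a shift that could propagate into the arguments of downstream functions; this is exactly where the absorption principle of Section~\ref{sec:problem} must be invoked.
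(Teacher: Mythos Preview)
Your proposal is correct and follows essentially the same strategy as the paper's proof: for the first claim both perturb $\Phi_j$ by an analytic term vanishing at the frozen value of $y_i$ (the paper writes $\tilde\Phi_j=\Phi_j+\psi_i(y_i)$ with $\psi_i(0)=0$, taking $c_i=0$ by convention), and the second and third claims are argued identically. The paper's version is slightly terser because its model has $y_i^k=u_i^{k-1}$ for sources with no constant term, so your handling of $c_i$ and its absorption in the third claim is not needed there---though your explicit verification that the competitor preserves Assumption~\ref{ass:full_excitation} is a detail the paper glosses over.
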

\begin{proof}
Without loss of generality, let us consider that the node 1 is a source and $j$ is an out-neighbor of 1.
    The measurement of $j$ provides the output
    \begin{equation}\label{eq:excitation_source1}
        y_j^k=u_j^{k-1}+\Phi_j(y_1^{k-0},\ldots,y_1^{k-m_{j,1}},y_2^{k-0},\ldots,y_{M_j}^{k-m_{j,M_j}}).    
    \end{equation}
    If the source $1$ is not excited (i.e., $u_1^k=0$ for all $k$), its output $y_1^k=0$, and   \eqref{eq:excitation_source1} becomes
    \begin{equation}\label{eq:excitation_source2}
        y_j^k=u_j^{k-1}+\Phi_j(0,\ldots,0,y_2^{k-0},\ldots,y_{M_j}^{k-m_{j,M_j}}).  
    \end{equation}
    Notice that any node function $\tilde \Phi_j=\psi_1(y_1^{k-0},\ldots,y_1^{k-m_{j,1}})+\Phi_j$ where $\psi_1$ is analytic and  $\psi_1(0)=0$, also satisfies \eqref{eq:excitation_source2}. This implies that it is not possible to identify $\Phi_j$ and hence, the excitation of all the sources is necessary for identifiability of the network.
    Now, the excitation of a sink $\ell$ can only affect its output:
    $y_\ell^k=u_\ell^{k-1}+\Phi_\ell$. Therefore,
    the identifiability of $\Phi_\ell$ is independent of the value of $u_\ell^{k-1}$, which implies that the excitation of the sink is never necessary for identifiability.
Finally, the measurement of a source $i$ provides the output $y_i=u_i^{k-1}$ that does not include any unknown dynamics.
\end{proof}
Next, we establish a link between vertex-disjoint paths and the rank of submatrices of $T_{H_v(G)}(\uu^v)=(I-J_{H_v(G)}(\uu^v))^{-1}$, which is well defined since $J_{H_v(G)}(\uu^v)$ is upper-triangular in a DAG \cite{hendrickx2019identifiability}.
\begin{definition}[Vertex-Disjoint Paths \cite{hendrickx2019identifiability}]
    A group of paths are mutually vertex disjoint if no two paths of this group contain the same vertex.
\end{definition}

\begin{proposition}\label{prop:generic_rank_vertex_disjoint}
Given a DAG $G$ and a node $i$ where there are vertex-disjoint paths from excited nodes to the in-neighbors of $i$. Let us denote by $A$ a subset of the copies of the in-neighbors of $i$ in $H_i(G)$. Then, there exists a subset of excited nodes $B$ in $H_i(G)$ with $|B|=|A|$ such that the generic rank of the submatrix $T_{H_i(G)}^{A,B}(\uu_G^k)$ is full.   
\end{proposition}
\begin{proof}
First, we will prove that there are vertex-disjoint paths from excited nodes to $A$ in $H_i(G)$.  Let us select an arbitrary set of vertex-disjoint paths in $G$, and let us  consider an arbitrary in-neighbor of $i$, denoted by $j$, which is reached through a path from the excited node $q$ in $G$. Notice that by the choice of the set of vertex-disjoint paths, the excited node $q$ can only reach $j$ in $G$. Now, let us consider a copy of $j$ with the smallest delay $j_m$, which is reached by a copy of $q$ denoted by $q_t$ through a path $P_{q_t\to j_m}$ in $H_i(G)$. A  copy of $j$ with a different delay $j_{m+\delta}$ must necessarily be reached by a different copy $q_{t+\delta}$ through a different path $P_{q_{t+\delta}\to j_{m+\delta}}$ constructed by copies of the nodes in the path $P_{q_t\to j_m}$ delayed by $\delta$. Therefore, the paths $P_{q_t\to j_m}$ and $P_{q_{t+\delta}\to j_{m+\delta}}$ are vertex-disjoint. By applying the same procedure to other copies of $j$ in $H_i(G)$ and other in-neighbors of $i$ in $G$, we guarantee the existence of a set of vertex-disjoint paths from excited nodes to $A$.

\noindent    Now, we will prove the existence of the subset of excited nodes $B$.
    From Proposition~\ref{prop:generic_rank_submatrix}, the rank of
    any square submatrix $T_{H_i(G)}^{A,B}(\uu^i)$ is generic. Let us consider
    the particular case of linear functions of the form $f_{i,j}^{\{\ell\}}(y_j^{k-\ell})=\varphi_{i,j,\ell}y_j^{k-\ell}$, which clearly belong to the class $\Fone$. In the graph $G$, let us select a set of vertex-disjoint paths from excited nodes to the in-neighbors of $i$, and for each function $f_{i,j}$ in one of these paths, we set $\varphi_{i,j,s_{i,j}}=1$ where $s_{i,j}$ is the minimum delay. The rest of parameters $\varphi_{i,j,\ell}$ are set to zero. 
    For this type of functions, the entries of the matrix $J_\pi^0(\y^i)$ in \eqref{eq:matrix_J_0} are constant
    values given by $\varphi_{i,j,\ell}$ and since they 
    are independent of $\y^i$, the matrix $J_{H_i(G)}(\uu^i)$ 
    in \eqref{eq:matrix_J_G} satisfies $J_{H_i(G)}(\uu^i)=J_\pi^0(\y^i)$.  
    According to the proof of Proposition~V.1 in \cite{hendrickx2019identifiability}, for $a_m\in A$ and $b_t\in B$ an entry $[T_{H_i(G)}]_{b_t,a_m}=1$ only if $a_m$ and $b_t$ are on the same path. Let us consider an in-neighbor $j$ of $i$ that is reached from the excited node $q$.
    A copy of $j$ with delay $m$ denoted by $j_m$, is reached only by the copy of the node $q$ with delay $t$, denoted by $q_t$, where $t=m+\sum_{(a,b)\in P_{q\to j}}s_{a,b}$. Other copies of $j$ must be reached necessarily from other copies of $q$ because of the different delays. 
    Therefore, the matrix $T_{H_i(G)}^{A,B}(\uu^i)$ is a permutation matrix and has full rank. Finally,
    since the rank is generic and we showed that it is full for this particular case, the proof is completed.
\end{proof}

Now, we provide a sufficient condition based on vertex-disjoint paths for the generic identifiability of DAGs.

\begin{theorem}\label{thm:sufficiency}
    In the full measurement case, a DAG is generically identifiable in the class $\Fone$ if there are vertex-disjoint paths from excited nodes to the in-neighbors of each node.
\end{theorem}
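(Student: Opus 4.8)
The plan is to fix an arbitrary node $i$ and show that $\Phi_i$ is generically identifiable; since $V$ is finite and a finite union of zero-measure sets has measure zero, generic identifiability of the whole network in $\Fone$ then follows from Definition~\ref{def:math_identifiability}. By Proposition~\ref{prop:equivalent_cases} it suffices to argue in the path-independent delay case, so every output is a function of the excitation vector $\uu_G^k$ alone. Suppose $\{\Phi\}$ and $\{\tilde\Phi\}$ both generate the same measured set, $F(\NN^e)=\tilde F(\NN^e)$. Because all nodes are measured, $F_j=\tilde F_j$ for every $j\in V$, and since $y_j^k=u_j^{k-1}+F_j$, the outputs coincide as functions of the excitations, $y_j(\uu_G^k)=\tilde y_j(\uu_G^k)$ for all $j$. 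In particular the in-neighbour outputs agree, $y_{\mathcal{N}_i}=\tilde y_{\mathcal{N}_i}$, and subtracting $y_i^k=u_i^{k-1}+\Phi_i(y_{\mathcal{N}_i})$ from the analogous identity for $\tilde\Phi_i$ gives
\begin{equation*}
\Phi_i\big(y_{\mathcal{N}_i}(\uu_G^k)\big)=\tilde\Phi_i\big(y_{\mathcal{N}_i}(\uu_G^k)\big)\qquad\text{for all }\uu_G^k .
\end{equation*}
Thus $\Phi_i-\tilde\Phi_i$, which is analytic on $\R^{\abs{\mathcal{N}_i}}$, vanishes on the image of the map $\uu_G^k\mapsto y_{\mathcal{N}_i}(\uu_G^k)$.

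The identifiability of $\Phi_i$ therefore reduces to showing that this image contains a nonempty open set: an analytic function vanishing on an open set vanishes identically, so $\Phi_i=\tilde\Phi_i$ would follow, and since this holds for every $i$ we would get $\{\Phi\}=\{\tilde\Phi\}$. By the implicit function theorem it is enough that the differential of $\uu_G^k\mapsto y_{\mathcal{N}_i}$ have full rank $\abs{\mathcal{N}_i}$ at one point, because a submersion is locally open. Differentiating the static relations $y_p=u_p+\Phi_p(y_{\mathcal{N}_p})$ gives $\mathrm dy=\mathrm du+J\,\mathrm dy$ with $J_{pq}=\partial\Phi_p/\partial y_q$, whence $\partial y/\partial u=(I-J)^{-1}$; the required differential is exactly the submatrix of $(I-J)^{-1}$ with rows $\mathcal{N}_i$ and columns $\NN^e$. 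Note that this $J$ is the full non-additive Jacobian, whose entries $f_{p,q}'(y_q)+\partial g_p/\partial y_q$ reduce to those of the additive matrix $J_G$ in \eqref{eq:matrix_J_G} only when the $g_p$ are dropped. Hence it remains to prove that this submatrix generically has rank $\abs{\mathcal{N}_i}$.

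This is the heart of the argument. By hypothesis there are $\abs{\mathcal{N}_i}$ vertex-disjoint paths from $\NN^e$ to $\mathcal{N}_i$, so Proposition~\ref{prop:generic_rank_vertex_disjoint} gives that the corresponding submatrix of $T_G(\uu_G^k)=(I-J_G)^{-1}$ has generic rank at least $\abs{\mathcal{N}_i}$, i.e.\ full row rank. To transfer this lower bound to the non-additive Jacobian $(I-J)^{-1}$ I would exhibit a single configuration at which the rank already equals $\abs{\mathcal{N}_i}$; generic maximality of the rank (Proposition~\ref{prop:generic_rank_submatrix}, which the excerpt notes also applies to submatrices of $T_G$) then propagates it to almost all $\uu_G^k$. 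The natural seed is the origin $\y^k=\zero$: since $g_p$ contains only monomials of degree $\ge 2$ together with a constant, $\partial g_p/\partial y_q$ vanishes at $\zero$, so $J_{pq}(\zero)=f_{p,q}'(0)=\alpha_{p,q}^{(1)}$ and the full Jacobian collapses precisely to the weighted adjacency matrix of the linearized additive model treated in Proposition~\ref{prop:generic_rank_vertex_disjoint}. For generic linear coefficients $\alpha^{(1)}$ the associated submatrix of $(I-J(\zero))^{-1}$ then has rank $\abs{\mathcal{N}_i}$, which is the seed we need.

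The delicate point—and the step I expect to require the most care—is the reachability of this seeding configuration: the origin $\y^k=\zero$ is attained by some excitation only if the constant terms carried by the non-excited nodes vanish, which need not hold a priori. I would resolve this by absorbing the constants, since a constant term of $\Phi_p$ may be moved between $g_p$ and the $f_{p,q}$ without altering $\Phi_p$, as illustrated in Fig.~\ref{fig:decomposition}; equivalently, expanding $\detm$ of the relevant minor of $(I-J)^{-1}$ over systems of vertex-disjoint paths shows that its homogeneous pure-$\alpha^{(1)}$ part agrees with the determinant of the linearized additive submatrix, so the minor is a nonzero analytic function of $\uu_G^k$ for generic $\alpha^{(1)}$ regardless of the $g_p$. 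Either way the submatrix has full rank for almost all $\uu_G^k$ outside a zero-measure set of coefficients $\alpha^{(1)}$, so the map $\uu_G^k\mapsto y_{\mathcal{N}_i}$ is a submersion, its image contains an open set, and $\Phi_i=\tilde\Phi_i$ by analyticity. Taking the union of the finitely many exceptional parameter sets over $i\in V$ yields a single zero-measure set, establishing generic identifiability of $G$ in $\Fone$.
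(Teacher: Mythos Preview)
Your proposal is correct and follows essentially the paper's route: reduce identifiability of $\Phi_i$ to local surjectivity of $\uu_G^k\mapsto y_{\NN_i}$, establish full Jacobian rank via Proposition~\ref{prop:generic_rank_vertex_disjoint}, and conclude by the identity theorem for analytic functions (the paper packages the submersion step as Lemma~\ref{lemma:mapping}); your direct use of full measurement to get $y_j=\tilde y_j$ for all $j$ is a legitimate shortcut that replaces the paper's induction on the topological order. One caveat: ``absorbing the constants'' does not actually resolve the reachability of $\y^k=\zero$, since shuffling the constant between $g_p$ and the $f_{p,q}$ leaves $\Phi_p(0)$ and hence $y(\zero)$ unchanged---but your alternative argument, specializing the minor's determinant to the purely linear choice to exhibit a nonvanishing coefficient regardless of the $g_p$, is exactly the mechanism the paper uses (phrased there as ``$b_{\min}=a_{\min}$ when $G=0$'').
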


Before presenting the proof of Theorem~\ref{thm:sufficiency}, we recall a
technical result that will be used in the proof.

\begin{lemma}[Theorem 2.35 \cite{laczkovich2017real}]\label{lemma:mapping}
    Let $H\subset \R^p$ and let $f:H\to\R^q$, where $p\ge q$. If $f$ is continuously differentiable at the point $a\in \text{int} \;H$ and the linear mapping $f'(a):\R^p\to\R^q$ is surjective, then the range of $f$ contains a neighborhood of $f(a)$.
\end{lemma}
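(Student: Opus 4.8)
The plan is to argue by induction along a topological ordering of the DAG, reducing the identifiability of each node function $\Phi_i$ to a statement about the range of the map that sends the excitation signals to the outputs of the in-neighbors of $i$. By Proposition~\ref{prop:equivalent_cases} it suffices to work in the path-independent delay case \eqref{eq:function_Fi_2}, so that every measured function is of the form $F_i(\uu_G^k)$. Fix two sets of node functions $\{\Phi\}$ and $\{\tilde\Phi\}$ in $\Fone$ generating the same measured set $F(\NN^e)=\tilde F(\NN^e)$, and relabel the nodes so that $j<i$ whenever $j$ is an ancestor of $i$. The excited sources, for which $y_i^k=u_i^{k-1}$, form the trivial base case. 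Assuming $\Phi_j=\tilde\Phi_j$ for every $j<i$, I would show $\Phi_i=\tilde\Phi_i$ as follows: every in-neighbor $j\in\NN_i$ satisfies $j<i$, so the map $\phi_i:\uu_G^k\mapsto(y_j^k)_{j\in\NN_i}$ giving the in-neighbor outputs is determined entirely by the already-matched upstream functions, whence $\phi_i=\tilde\phi_i$. Since $F_i=\Phi_i\circ\phi_i$ and $\tilde F_i=\tilde\Phi_i\circ\phi_i$, the equality $F_i=\tilde F_i$ forces $\Phi_i$ and $\tilde\Phi_i$ to coincide on the range of $\phi_i$.

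The crux is then to show that the range of $\phi_i$ contains an open set, generically. Differentiating the fixed-point relations $y_i^k=u_i^{k-1}+\Phi_i(y_{\NN_i}^k)$ with respect to $\uu_G^k$ gives a Jacobian $D=(I-M)^{-1}B$, where $M_{ij}=\partial\Phi_i/\partial y_j=f_{i,j}'(y_j^k)+\partial g_i/\partial y_j$ and $B$ selects the excited nodes; the rows of $D$ indexed by $\NN_i$ constitute $\phi_i'$. When $g_i=0$ this reduces to $M=J_\FF^0(\y^k)$ and hence to a submatrix of $T_G(\uu_G^k)B=(I-J_G(\uu_G^k))^{-1}B$. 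Full row rank $|\NN_i|$ of $\phi_i'$ (surjectivity of the linear map onto $\R^{|\NN_i|}$, which requires $|\NN^e|\ge|\NN_i|$, as guaranteed by the hypothesis) is a generic property: some maximal square subdeterminant is an analytic function of the parameters and of $\uu_G^k$, so by the argument of Proposition~\ref{prop:generic_rank_submatrix} it is enough to exhibit one parameter choice and one point at which it is nonzero. I would produce such a witness inside the additive linear subfamily $f_{i,j}(y_j^k)=\varphi_{i,j}y_j^k$ with all $g_i=0$, where $\phi_i'$ becomes the constant submatrix $T_G^{A',\NN_i}$ for a subset $A'\subseteq\NN^e$ of excited nodes carrying the vertex-disjoint paths; by Proposition~\ref{prop:generic_rank_vertex_disjoint} its generic rank is at least the number of such paths, which the hypothesis makes equal to $|\NN_i|$.

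With a witness certifying that the relevant subdeterminant is not identically zero, for generic functions in $\Fone$ the Jacobian $\phi_i'(a)$ is surjective for almost all interior points $a$. Invoking Lemma~\ref{lemma:mapping} at such an $a$, the range of $\phi_i$ contains a neighborhood of $\phi_i(a)$, i.e.\ an open subset of $\R^{|\NN_i|}$. Since $\Phi_i$ and $\tilde\Phi_i$ are analytic and agree on this open set, the identity theorem for real-analytic functions yields $\Phi_i=\tilde\Phi_i$ everywhere, closing the induction. For the genericity bookkeeping, each node contributes one exceptional zero-measure set of parameters (where the subdeterminant would vanish identically, which is excluded) together with the measure-zero set of $a$ to be avoided; as the DAG has finitely many nodes, the union of these sets still has measure zero, so $G$ is generically identifiable in $\Fone$ in the sense of Definition~\ref{def:math_identifiability}.

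I expect the main obstacle to be the middle step: identifying the Jacobian of the genuinely non-additive in-neighbor map $\phi_i$ with the additive tool matrix $T_G^{A',\NN_i}$, and transferring the full-rank conclusion from the additive linear witness to the full parametrization. The delicate point is that the extra parameters carried by the $g_i$ terms enlarge the parameter space, so I must argue that the maximal subdeterminant—viewed as an analytic function of \emph{all} parameters (including those of $g_i$) and of $\uu_G^k$—is not identically zero, which the $g_i=0$ linear witness guarantees, and that its zero set is therefore of measure zero in the sense of Definition~\ref{def:K_generic_prop}.
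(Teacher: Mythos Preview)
You have proved the wrong statement. The lemma you were asked to address is Lemma~\ref{lemma:mapping}, a classical real-analysis fact (a local surjection theorem for $C^1$ submersions, essentially the open-mapping consequence of the inverse/implicit function theorem). The paper does not prove it at all: it merely cites it from \cite{laczkovich2017real} as a technical tool. A self-contained proof would proceed by restricting $f$ to a $q$-dimensional affine subspace through $a$ on which $f'(a)$ is invertible and applying the inverse function theorem; none of your argument touches this.

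What you have actually written is a proof sketch of Theorem~\ref{thm:sufficiency}, the sufficiency result that \emph{invokes} Lemma~\ref{lemma:mapping}. As a proof of that theorem your outline is essentially the paper's own argument: induction along a topological ordering, reduction to showing the in-neighbor map has open range, Jacobian analysis via $T_G$, a linear witness inside the additive subfamily to certify the determinant is not identically zero, then Lemma~\ref{lemma:mapping} plus the identity theorem. The only visible difference is packaging: the paper explicitly isolates a minimal-degree coefficient $a_{\min}(P_{\min})$ of the determinant and tracks how the $g_i$-parameters enter a corresponding coefficient $b_{\min}(Q_{\min})$, whereas you phrase the same step as ``analytic function of all parameters not identically zero because the $g_i=0$ slice already gives a nonzero value.'' Both are the same genericity argument.
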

\begin{myproof}{Theorem~\ref{thm:sufficiency}}
We proceed by induction. For a DAG and a topological ordering, we take a node $i$ and we denote without loss of generality by $\NN_i=\{1,\ldots,p\}$ the set of in-neighbors. We assume by induction that all the dynamics $F_j$ preceding node $i$ in the topological ordering have been identified. In the basic case of $\NN_i$ being empty ($i$ is a source), this assumption is trivial.
The measurement of $i$ is given by:
\begin{align*}
    y_i^k&=u_i^{k-1}+\Phi_i(y_1^{k-0},\ldots,y_1^{k-m_{i,1}},\ldots,y_{M_i}^{k-0},\ldots,y_{M_i}^{k-m_{i,M_i}})\\
    &=\sum_{j=1}^p\sum_{\ell_j=0}^{m_{i,j}} f_{i,j}^{\{\ell_j\}}(\xi_{i,j}^{\{\ell_j\}}(\uu_e^i))+g_i(\xi_{i,1}^{\{0\}},\ldots,\xi_{i,p}^{\{m_{i,p}\}})\\
    &=F_i(\xi_{i,1}^{\{0\}},\ldots,\xi_{i,p}^{\{m_{i,p}\}}),
\end{align*} 
where $\uu_e^i\in \R^{N_e^i}$ is a vector with all the excitation signals that arrive to $i$ and $\xi_{i,j}^{\{\ell_j\}}=u_j^{k-\ell_j-1}\!+\!F_j(\uu_e^i)$ is the output of the in-neighbor $j$ of $i$ with its corresponding delay $\ell_j$ as a function of all the excitation signals that arrive to $i$.
    Let us assume that there exists a set $\{ \Phi \}\neq \{ \tilde \Phi \}$ such that $F(\NN^e)=\tilde F(\NN^e)$. Since $i\in \NN^m$, the measured functions $F_i$ and $\tilde F_i$ must satisfy:
\begin{equation}\label{eq:identifiability_problem}
    F_i(\xi_{i,1}^{\{0\}},\ldots,\xi_{i,p}^{\{m_{i,p}\}})=\tilde F_i(\xi_{i,1}^{\{0\}},\ldots,\xi_{i,p}^{\{m_{i,p}\}}),
\end{equation}
given that all the dynamics preceding node $i$ in the topological ordering are known (i.e., $\xi_{i,j}^{\{\ell\}}=\tilde \xi_{i,j}^{\{\ell\}}$ for all $j\in \NN_i$ and all $\ell$) according to the induction.
Now, let us define the mapping $\Xi_i:\R^{N_e^i}\to\R^{N_p^i}$ 
$$
\Xi_i(\uu_e^i)\!=\!(\xi_{i,1}^{\{0\}}(\uu_e^i),\ldots,\xi_{i,p}^{\{m_{i,p}\}}(\uu_e^i)),
$$
which sends the $N_e^i$ excitation signals with a path to the node $i$ to the $N_p^i$ outputs of the in-neighbors of $i$ considering different delays. We denote the Jacobian matrix of $\Xi_i$ as $J_{\Xi_i}(\uu_e^i)$. Now, let us consider the subset of in-neighbors of $i_0$ in the unfolded digraph $H_i(G)$ denoted by $A$.
According to Proposition~\ref{prop:generic_rank_vertex_disjoint}, there exists a set of excitation signals $B$ such that the generic rank of $T_{H_i(G)}^{A,B}(\uu^i)$ is full rank. This implies that the determinant $\detm(T_{H_i(G)}^{A,B}(\uu^i))$ is not zero and there exists at least a minimum non-zero coefficient $a_{\min}(P_{\min})$ where the vector of parameters $P_{\min}\in\R^K$ is associated with coefficients of the functions $f_{i,j}^{\{\ell\}}$ according to \eqref{eq:parameterization}. 

\noindent Now, let us analyze the rank of the Jacobian matrix restricted to the set of excitation signals $B$ denoted by $J_{\Xi_i}^B (\uu_e^i)$. The determinant  $\detm(J_{\Xi_i}^B (\uu_e^i))$ is an analytic function where each coefficient is a function of a finite number of coefficients of the functions $f_{i,j}^{\{\ell\}}$ and $g_i$ depending on the degree of the coefficient of $\detm(J_{\Xi_i}^B (\uu_e^i))$. Let us consider a coefficient $b_{\min}(Q_{\min})$ where $Q_{\min}=(P_{\min} \quad G)^T\in\R^L$ and $G$ is a vector with a finite number of coefficients of the functions $g_i$. Then, let us consider the particular case $G=0$, where  $b_{\min}=a_{\min}$ is non zero, and it could only be zero in a subspace of dimension at most $L-1$, which has zero measure.

\noindent Since the determinant $\detm(J_{\Xi_i}^B (\uu_e^i))$ is an analytic function of $\uu_e^i$, it can be zero everywhere or only on a measure-zero set \cite{krantz2002primer}. This implies that there exists at least a point $\hat \uu_e^i\neq 0$ such that $\detm(J_{\Xi_i}^A (\hat \uu_e^i))\neq 0$ and hence, the generic rank of  $J_{\Xi_i}( \uu_e^i)$ is full for almost all $\uu_e^i$. Therefore, 
$J_{\Xi_i}(\hat \uu_e^i)$ is surjective and
by virtue of Lemma~\ref{lemma:mapping}, the range of $\Xi_i$ must contain a neighborhood of $\Xi_i(\hat \uu_e^i)$, which implies that \eqref{eq:identifiability_problem} holds on a set of positive measure. Then, by the Identity Theorem of analytic functions \cite{krantz2002primer}, we guarantee that $F_i=\tilde F_i$ everywhere, which implies that $\Phi_i=\tilde \Phi_i$ everywhere.
Hence, the node function $\Phi_i$ is generically identifiable with respect to the parameters  encompassed in the vector $Q_{\min}$.

\noindent Now, notice that for $i=2$ in the topological ordering, if there is an edge $f_{2,1}$, the functions $\xi_{2,1}^{\{\ell\}}$ are the identity function and the node function $\Phi_2$ can be clearly identified. Then, by induction, the identifiability analysis is valid for any node in the DAG. Thus, the entire DAG is generically identifiable.
\end{myproof}

A direct consequence of Theorem~\ref{thm:sufficiency} and Lemma~\ref{lemma:sinks_sources} is a sufficient and necessary condition for  identifiability of trees.

\begin{proposition}\label{prop:trees}
    In the full measurement case, a tree is generically identifiable in the class $\Fone$ if and only if all the sources are excited.
\end{proposition}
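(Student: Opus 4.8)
The plan is to prove Proposition~\ref{prop:trees} by showing the two directions separately, using Theorem~\ref{thm:sufficiency} for sufficiency and Lemma~\ref{lemma:sinks_sources} for necessity. A tree (an out-forest/branching where each node has at most one in-neighbor) is a special DAG, so both earlier results apply directly. The key structural fact I would exploit is that in a tree, for every node $i$ there is a \emph{unique} path from any source to $i$; consequently the collection of paths from distinct sources to the in-neighbors of $i$ can never share a vertex, making the vertex-disjoint path condition essentially automatic once the relevant sources are excited.

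For the sufficiency direction, assume all sources are excited. First I would observe that every node $i$ with a nonempty in-neighbor set has exactly one in-neighbor $j$ (by the tree property), so I only need a single path from an excited node to $j$. Since the graph is a tree and every source is excited, $j$ is reachable from some source $s$ via the unique path $s \leadsto j$, and this single path trivially satisfies the vertex-disjoint condition (one path is always vertex-disjoint from itself). Thus the hypothesis of Theorem~\ref{thm:sufficiency} holds for every node, and the tree is generically identifiable in $\Fone$.

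For the necessity direction, suppose some source $s$ is \emph{not} excited. I would invoke Lemma~\ref{lemma:sinks_sources}, which states that the outgoing edges of a source are not identifiable when the source is not excited. Concretely, let $j$ be an out-neighbor of $s$; then the node function $\Phi_j$ is not identifiable, since as shown in the proof of Lemma~\ref{lemma:sinks_sources} one may add any analytic $\psi_s$ with $\psi_s(0)=0$ to $\Phi_j$ in the unexcited coordinate without changing the measured function $F_j$. This perturbation is not confined to a zero-measure set of parameters, so the failure is genuinely non-generic, establishing that excitation of every source is necessary.

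The main subtlety to handle carefully is the interplay between the tree structure and the phrase ``in-neighbors of each node'' in Theorem~\ref{thm:sufficiency}: I must confirm that the single required path lands on the in-neighbor of $i$ rather than on $i$ itself, and that an excited source genuinely reaches that in-neighbor. In a tree this is immediate because reachability from the (unique) root-side source is guaranteed and uniqueness of paths removes any vertex-sharing concern, so no genuine obstacle arises—the argument is essentially a specialization of the two cited results. The only care needed is to state precisely that ``tree'' here means a DAG in which each node has at most one in-neighbor, so that the unique-path property I rely on actually holds.
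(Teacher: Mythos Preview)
Your proposal is correct and matches the paper's approach exactly: the proposition is presented there as a direct consequence of Theorem~\ref{thm:sufficiency} and Lemma~\ref{lemma:sinks_sources}, which is precisely what you invoke for the two directions. The only caveat is your reading of ``tree'' as a DAG with at most one in-neighbor per node (which, under the paper's weak-connectivity assumption, would force a single source); if the intended meaning is a polytree (underlying undirected graph a tree, allowing several sources and nodes with several in-neighbors), your sufficiency step needs the extra but still elementary observation that distinct in-neighbors of $i$ lie in distinct components of the tree with $i$ removed, so the directed paths from their respective sources never share a vertex and Theorem~\ref{thm:sufficiency} again applies.
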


\subsection{Necessity of the vertex-disjoint path condition}

\begin{figure}[!ht]
    \centering
    \begin{tikzpicture}
    [
roundnodes/.style={circle, draw=black!60, fill=black!5, very thick, minimum size=1mm},roundnode/.style={circle, draw=white!60, fill=white!5, very thick, minimum size=1mm},roundnodes2/.style={circle, draw=black!60, fill=black!25, very thick, minimum size=1mm},roundnodes3/.style={circle, draw=black!60, fill=white!30, very thick, minimum size=1mm}
]

\small

\node[roundnodes3](node1){1};
\node[roundnodes2](node4)[right=of node1,yshift=6mm,xshift=0cm]{2};
\node[roundnodes2](node5)[right=of node1,yshift=-6mm,xshift=0cm]{3};
\node[roundnodes2](node7)[right=of node4,yshift=-6mm,xshift=0cm]{4};

\draw[-{Classical TikZ Rightarrow[length=1.5mm]}] (node1) to node [above,swap,yshift=0mm] {} (node4);
\draw[-{Classical TikZ Rightarrow[length=1.5mm]}] (node1) to node [above,swap,yshift=0mm] {} (node5);
\draw[-{Classical TikZ Rightarrow[length=1.5mm]}] (node4) to node [above,swap,yshift=0mm] {} (node7);
\draw[-{Classical TikZ Rightarrow[length=1.5mm]}] (node5) to node [above,swap,yshift=0mm] {} (node7);

\normalsize

\end{tikzpicture}
    \caption{DAG that is identifiable for a particular choice of functions even if there are no vertex-disjoint paths from excited nodes to the in-neighbors of node 4.}
    \label{fig:example}
\end{figure}

The vertex-disjoint path condition of Theorem~\ref{thm:sufficiency} is not necessary for certain cases as we can see from the following counterexample. 

\begin{example}\label{ex:Dirichlet}
  Consider the DAG in Fig.~\ref{fig:example} with node functions $\Phi_2(y_1^{k-1})=e^{qy_1^{k-1}}$ with $q$ irrational, $\Phi_3(y_1^{k-1})=e^{y_1^{k-1}}$ and $\Phi_4(y_2^{k-1},y_3^{k-1})$. The measurement of the node 4 provides the output
$
y_4^k=\Phi_4(\Phi_2(u_1^{k-2}),\Phi_3(u_1^{k-2})).
$
Let us consider that there is another function $\tilde \Phi_4$ such that:
$$
\Phi_4(\Phi_2(u_1^{k-2}),\Phi_3(u_1^{k-2}))=\tilde \Phi_4(\Phi_2(u_1^{k-2}),\Phi_3(u_1^{k-2})),
$$
which is equivalent to
$
\Delta \Phi_4(\Phi_2(u_1^{k-2}),\Phi_3(u_1^{k-2}))=0.
$
Since $\Delta \Phi_4$ is analytic, its series is given by:
$$
\Delta \Phi_4 =\sum_{m,n=0}^\infty a_{m,n} e^{(qm+n)u_1^{k-2}},
$$
which is a Dirichlet series since $qm+n$ is irrational. This implies that every coefficient $a_{m,n}$ must be zero to satisfy $\Delta \Phi_4=0$. Therefore, $\Delta \Phi_4\equiv 0$ and $\Phi_4$ is identifiable despite not satisfying the vertex-disjoint path condition.
\end{example}

This shows that it is not true that the vertex-disjoint path condition determines if a DAG is identifiable for almost all functions or for none as in \cite{hendrickx2019identifiability}. However, it remains an open question to determine if in the absence of vertex-disjoint paths, we have identifiability for almost no function as in pseudo-genericity \cite{legat2024identifiability}.

The vertex-disjoint path condition plays an important role if we consider the class of polynomials functions and we extend each class $\F_i$ by considering more potential delays than in the function $\Phi_i$ in \eqref{eq:general_nonlinear}. For instance, for a node function $\Phi_1(y_2^{k-1},y_2^{k-2})$ we can consider a class of polynomial functions with more than 2 arguments.

\begin{definition}[Class of functions $\Fthree_i$]\label{def:class_Fthree} Let $\Fthree_i$ be the class of functions $\Phi:\R^{m_i}\to\R$ where each function $\Phi$ is polynomial.
\end{definition}

Next, we introduce the notion of an affine algebraic variety in the field $\R$.

\begin{definition}[Affine algebraic variety]
    Let $f_1,\ldots,f_s$ be polynomials in $\R[x_1,\ldots,x_n]$. Then we call $\V(f_1,\ldots,f_s)$ the affine algebraic variety defined as $\V(f_1,\ldots,f_s)= \{ (a_1,\ldots,a_n)\in \R^n\ | f_i(a_1,\ldots,a_n)=0 \text{ for all } 1\le i\le s\}.$
\end{definition}

The following lemma is a direct consequence of the Polynomial Implicitization Theorem \cite{cox2015ideals}.

\begin{lemma}\label{lemma:algebraic_variety}
Let $\V\subseteq \R^n$ be given parametrically as
    \begin{align*}
    x_1&=f_1(t_1,\ldots,t_m)\\
        &\;\,\vdots\\
        x_n&=f_n(t_1,\ldots,t_m),
    \end{align*}
    where $f_1,\ldots,f_n$ are polynomials in $\R[t_1,\ldots,t_m]$ and $n>m$. Then $\V$ is included in an affine algebraic variety $\mathcal{W}(F)$,  for a polynomial $F\in \R[x_1,\ldots,x_n]$ that is not identically zero. 
\end{lemma}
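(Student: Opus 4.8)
The plan is to reduce this geometric statement to a purely algebraic fact about transcendence degree by passing to the rational function field $\R(t_1,\ldots,t_m)$. The key observation is that the parametrizing polynomials $f_1,\ldots,f_n$ are $n$ elements of this field, whereas the field has transcendence degree exactly $m$ over $\R$; since $n>m$, these elements cannot be algebraically independent, and the witness to their algebraic dependence is precisely the polynomial $F$ we seek.

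Concretely, first I would view each $f_j$ as an element of $L:=\R(t_1,\ldots,t_m)$, a field whose transcendence degree over $\R$ equals $m$ (the coordinates $t_1,\ldots,t_m$ themselves form a transcendence basis). Because $n>m$, the $n$ elements $f_1,\ldots,f_n$ exceed the transcendence degree and are therefore algebraically dependent over $\R$. By the very definition of algebraic dependence, there exists a \emph{nonzero} polynomial $F\in\R[x_1,\ldots,x_n]$ such that $F(f_1,\ldots,f_n)=0$ holds identically in $L$, hence as an identity between polynomials in $\R[t_1,\ldots,t_m]$.

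The conclusion then follows at once: every point of $V$ has the form $(f_1(a),\ldots,f_n(a))$ for some $a=(a_1,\ldots,a_m)\in\R^m$, so substituting into the identity gives $F(f_1(a),\ldots,f_n(a))=0$, i.e.\ the point lies in $W(F)$. Hence $V\subseteq W(F)$, and since $F\not\equiv0$ this is the desired affine algebraic variety (indeed, because $\R$ is infinite, $W(F)$ is a proper subset of $\R^n$).

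The step that needs the most care is justifying that the relation $F(f_1,\ldots,f_n)=0$, which a priori is a formal identity in the function field $L$, transfers to a genuine vanishing statement at \emph{every} real parameter value $a\in\R^m$. This is unproblematic precisely because the $f_j$ are honest polynomials rather than merely rational functions: $F(f_1,\ldots,f_n)$ is then itself an element of $\R[t_1,\ldots,t_m]$ equal to the zero element of $L$, hence the zero polynomial, and therefore vanishes for every $a$. I note that this is simply the transcendence-degree form of the implicitization/elimination results in \cite{cox2015ideals}: equivalently, one may eliminate the variables $t_1,\ldots,t_m$ from the ideal $\langle x_1-f_1,\ldots,x_n-f_n\rangle$ and take any nonzero element of the resulting elimination ideal as $F$, the existence of such an element being guaranteed by the same count $n>m$.
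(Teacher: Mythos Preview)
Your proof is correct, but it proceeds by a different route than the paper's. The paper gives an elementary linear-algebra argument: letting $N$ be the maximal degree of the $f_j$, it observes that each product $f_1^{a_1}\cdots f_n^{a_n}$ with $a_1+\cdots+a_n\le M$ lies in the finite-dimensional space of polynomials of degree at most $NM$ in $t_1,\ldots,t_m$; since the number of such products grows like $M^n$ while the ambient dimension grows like $M^m$, for $M$ large enough the products are linearly dependent, and any nontrivial dependence relation yields the desired $F$. Your argument instead invokes transcendence degree: $\R(t_1,\ldots,t_m)$ has transcendence degree $m$ over $\R$, so any $n>m$ elements are algebraically dependent. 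The trade-off is that the paper's approach is more self-contained (no field-theoretic machinery) and in principle gives effective degree bounds on $F$, while yours is shorter, more conceptual, and makes transparent exactly which structural fact is being used. Both are standard and both are fine.
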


\begin{theorem}\label{thm:necessity}
    In the full measurement case, if in a DAG there are no vertex-disjoint paths from excited nodes to the in-neighbors of each node, then the DAG is unidentifiable.

\end{theorem}
\begin{proof}
    Let us consider an arbitrary node $i$ with $M_i$ in-neighbors, which yields the following identifiability problem
\begin{multline}\label{eq:necessity_1}
\Phi_i(y_1^{k-0},\ldots,y_1^{k-m_{i,1}},\ldots,y_{M_i}^{k-0},\ldots,y_{M_i}^{k-m_{i,M_i}})=\\
\tilde \Phi_i(y_1^{k-0},\ldots,y_1^{k-m_{i,1}},\ldots,y_{M_i}^{k-0},\ldots,y_{M_i}^{k-m_{i,M_i}},\ldots),    
\end{multline}
where $\tilde \Phi_i$ might be a function of more arguments (delayed variables) than the real function $\Phi_i$ since the class $\Fthree_i$ is extended to more possible delays. 
According to \eqref{eq:function_Fi}, the outputs of the nodes can be expressed as a function of the excitation signals corresponding to the nodes in $\NN^{e\to i}$, such that \eqref{eq:necessity_1} becomes
\begin{multline}\label{eq:equality_necessity}
    \Phi_i(F_1^{\{0\}}(\uu_e^i),\ldots,F_{M_i}^{\{m_{i,M_i}\}}(\uu_e^i))\\
    =\tilde \Phi_i(F_1^{\{0\}}(\uu_e^i),\ldots,F_{M_i}^{\{m_{i,M_i}\}}(\uu_e^i),\ldots),
\end{multline}
where $\uu_e^i$ encompasses all the excitation signals that arrive to $i$ with their corresponding delays, and we assume that all the dynamics $F_j$ of the in-neighbors of the node $i$ have been identified. Let us consider a potential function $\tilde\Phi_i$ of the form $\tilde\Phi_i=\Phi_i+H_i$. Notice that this function $\tilde\Phi_i$ also satisfies \eqref{eq:equality_necessity} if there exists a function $H_i\not\equiv0$ such that
\begin{equation}\label{eq:necessity_0}
H_i(F_1^{\{1\}}(\uu_e^i),\ldots,F_{M_i}^{\{m_{i,M_i}\}}(\uu_e^i),\ldots)=0.
\end{equation} 
Let us assume that any set of vertex-disjoint paths from excited nodes $\NN^{e\to i}$ can only reach at most $D_i$ in-neighbors of $i$, where $D_i<M_i$. 
From \cite[Lemma~V.3]{hendrickx2019identifiability}, the size of the smallest $\NN^{e\to i}-\NN_i$ disconnecting set\footnote{A set of nodes $\mathcal{B}$ is an $\mathcal{A}-\mathcal{C}$ disconnecting set if every path starting in $\mathcal{A}$ and ending in $\mathcal{C}$ contains at least one node in $\mathcal{B}$, which implies that if $\mathcal{B}$ is removed, there will be no path from $\mathcal{A}$ to $\mathcal{C}$ \cite{hendrickx2019identifiability}.}, denoted by $\mathcal{D}_{e\to i}$, is the maximum number of vertex disjoint paths given by $D_i$. Since each path from the set of excited nodes $\NN^{e\to i}$ must cross $\mathcal{D}_{e\to i}$, the output of each in-neighbor of $i$ can be expressed as a function of the outputs $\z_1,\ldots,\z_{D_i}$, where
$\z_j=(z_j^{k-0},\ldots,z_j^{k-T_j})$ includes all the outputs of the node $z_j\in \mathcal{D}_{e\to i}$ from $z_j^{k-0}$ until the largest delay $z_j^{k-T_j}$. Considering that the output of each node $z_j\in \mathcal{D}_{e\to i}$ can be expressed as a function of the excitation signals $\uu_e^i$, \eqref{eq:necessity_0} is given by
\begin{equation*}
    H_i(G_1^{\{0\}}(\z_1,\ldots,\z_{D_i}),\ldots,G_{M_i}^{\{m_{i,M_i}\}}(\z_1,\ldots,\z_{D_i}),\ldots)=0.    
\end{equation*}
Let us denote by $N_f$ the dimension of the domain of the function $\Phi_i$ and by $N_\z$ the total number of variables $z_j$ with the corresponding delays.
Similarly, we denote by $\tilde N_{f}$ the dimension of the domain of $H_i$ and by $\tilde N_\z$ the total number of variables $z_j$ in $H_i$. Notice that for each in-neighbor $j$ of $i$,  additional arguments from $G_j^{\{m_{i,j}\}}$ until $G_j^{\{m_{i,j+\delta}\}}$ imply $\tilde N_f=N_f+\delta M_i$,  but only $\tilde N_\z=N_\z+\delta D_i$, since only the last $\delta$ variables have new delays. Then, for  
$\delta>\frac{N_f-N_\z}{M_i-D_i}$, we can
apply Lemma~\ref{lemma:algebraic_variety} to guarantee that there exists a function $H_i \not\equiv 0$ such that $H_i(G_1^{\{0\}}(\z_1,\ldots,\z_{D_i}),\ldots,G_{M_i}^{\{m_{i,M_i}\}}(\z_1,\ldots,\z_{D_i}),\ldots)=0$ for all $\z_1,\ldots,\z_{D_i}$. 
Therefore, the node function $\Phi_i$ is not unique and not identifiable with the information obtained through the measurement of node $i$.   
\end{proof}

The proof of Theorem 2 shows that if the vertex-disjoint path condition is not satisfied, by considering more delays, we can always find another polynomial function that satisfies the information obtained with the measurement of the nodes.

\section{Non-additive and additive model}\label{sec:linear}

In \cite{vizuete2024partial}, a similar identifiability condition based on vertex-disjoint paths was obtained for the identifiability of DAGs in the additive model and the class of pure nonlinear functions.  
Unfortunately, the following counterexample shows that this sufficient condition is not necessary for identifiability in the class of pure nonlinear functions in the additive model \cite{vizuete2024partial}.

\begin{figure}[!ht]
    \centering
    \begin{tikzpicture}
    [
roundnodes/.style={circle, draw=black!60, fill=black!5, very thick, minimum size=1mm},roundnode/.style={circle, draw=white!60, fill=white!5, very thick, minimum size=1mm},roundnodes2/.style={circle, draw=black!60, fill=black!25, very thick, minimum size=1mm},roundnodes3/.style={circle, draw=black!60, fill=white!30, very thick, minimum size=1mm}
]

\small

\node[roundnodes3](node1){1};
\node[roundnodes3](node5)[below=of node1,yshift=-0.4cm,xshift=0cm]{2};
\node[roundnodes2](node2)[right=of node1,yshift=0mm,xshift=0.8cm]{3};
\node[roundnodes2](node3)[right=of node5,yshift=0mm,xshift=0.8cm]{5};
\node[roundnodes2](node6)[below=of node2,yshift=0.6cm,xshift=-1.5cm]{4};
\node[roundnodes2](node7)[right=of node6,yshift=0cm,xshift=-0.1cm]{6};
\node[roundnodes2](node8)[right=of node7,yshift=0cm,xshift=0.4cm]{7};

\draw[-{Classical TikZ Rightarrow[length=1.5mm]}] (node1) to node [above,swap,yshift=0mm] {} (node2);
\draw[-{Classical TikZ Rightarrow[length=1.5mm]}] (node5) to node [above,swap,yshift=0mm] {} (node3);
\draw[-{Classical TikZ Rightarrow[length=1.5mm]}] (node1) to node [above,swap,yshift=0mm] {} (node6);
\draw[-{Classical TikZ Rightarrow[length=1.5mm]}] (node5) to node [above,swap,yshift=0mm] {} (node6);
\draw[-{Classical TikZ Rightarrow[length=1.5mm]}] (node6) to node [above,swap,yshift=0mm] {} (node7);
\draw[-{Classical TikZ Rightarrow[length=1.5mm]}] (node7) to node [above,swap,yshift=0mm] {} (node8);
\draw[-{Classical TikZ Rightarrow[length=1.5mm]}] (node2) to node [above,swap,yshift=0mm] {} (node8);
\draw[-{Classical TikZ Rightarrow[length=1.5mm]}] (node3) to node [above,swap,yshift=0mm] {} (node8);

\normalsize

\end{tikzpicture}
    \caption{A DAG where the excitation of the sources 1 and 2 is sufficient for identifiability in the additive nonlinear model and the class of pure nonlinear functions. However, according to Theorem~\ref{thm:necessity}, the DAG is unidentifiable in the non-additive nonlinear model and the class $\Fthree$.
    }
    \label{fig:DAG_counterexample}
\end{figure}

\begin{example}\label{ex:counterexample}
   Consider the DAG in Fig.~\ref{fig:DAG_counterexample}. Since node 7 has 3 in-neighbors (3,5,6) and we only have two excited nodes (1,2), there are no vertex-disjoint paths from excitations to the in-neighbors of 7 and the DAG is unidentifiable in the non-additive model. For the additive model \eqref{eq:additive_model} and pure nonlinear functions, the measurement of the node 7 is of the form  
   \begin{multline*}
f_{7,3}(g_{7,3}(u_1))+f_{7,6}(g_{7,6}(u_1,u_2))+f_{7,5}(g_{7,5}(u_2))=\\
\tilde f_{7,3}(g_{7,3}(u_1))+\tilde f_{7,6}(g_{7,6}(u_1,u_2))+\tilde f_{7,5}(g_{7,5}(u_2)),    
\end{multline*}
which is equivalent to
\begin{equation*}
\Delta f_{7,3}(g_{7,3}(u_1))+\Delta f_{7,6}(g_{7,6}(u_1,u_2))+\Delta f_{7,5}(g_{7,5}(u_2))=0,    
\end{equation*}
where we omit the time dependence of the excitation signals.
The function $g_{7,6}$ must contain terms of the form $u_1^pu_2^q$, and if $\Delta f_{7,6}\not\equiv 0$, these terms  cannot be canceled by $\Delta f_{7,3}$ and $\Delta f_{7,5}$ that only depend on $u_1$ and $u_2$ respectively. Similarly, the function $\Delta f_{7,3}$ or $\Delta f_{7,5}$ cannot be different from zero. Therefore, all the functions $\Delta f_{7,3}$, $\Delta f_{7,6}$ and $\Delta f_{7,5}$ must be necessarily 0, and  the DAG is identifiable in the additive model despite not satisfying the vertex-disjoint path condition.
\end{example}

This distinction between the additive and the non-additive nonlinear models arises from the separability of the node function associated to the additive model, which limits considerably the potential class of functions that can satisfy the information obtained with the measurement of a node. However, notice that for the linear case, where the functions are necessarily separable, the vertex-disjoint path condition is also necessary. This can be explained by the nonlinearity of the functions, which might generate terms involving the product of two or more excitation signals, allowing us to identify a node function with a reduced number of excitation signals.

\section{Conclusions and future work}\label{sec:conclusions}

In this paper, we analyzed the identifiability of a network with a non-additive nonlinear model that can be expressed as a node function in the case of full measurements. Unlike the additive model, we showed that the presence of a static component in the node functions does not affect the identifiability, allowing us to work with a more general class of functions. 
Then, we introduced the notion of generic identifiability for nonlinear functions and characterized the measure-zero set associated with the generic notion as a subspace of finite dimension. For analytic functions in DAGs, we provided a sufficient condition for identifiability in terms of vertex-disjoint paths that coincide with the identifiability conditions in the linear case. For the class of polynomial functions, we showed that if the vertex-disjoint path condition is not satisfied, then no DAG is identifiable.

A natural continuation of this work is to derive identifiability conditions for more general digraphs where loops are present, which implies a function $F_i$ that depends on an infinite number of excitation signals.

\bibliographystyle{IEEEtran}
\bibliography{arxiv}

\end{document}